\def\@seccntDot{.}
\def\@seccntformat#1{\csname the#1\endcsname\@seccntDot\hskip 0.5em}
\renewcommand\section{\@startsection{section}{1}{\z@}%
{18\p@ \@plus 6\p@ \@minus 3\p@}%
{9\p@ \@plus 6\p@ \@minus 3\p@}%
{\large\bfseries\boldmath}}
\renewcommand\subsection{\@startsection{subsection}{2}{\z@}%
{12\p@ \@plus 6\p@ \@minus 3\p@}%
{3\p@ \@plus 6\p@ \@minus 3\p@}%
{\bfseries\boldmath}}
\renewcommand\subsubsection{\@startsection{subsubsection}{3}{\z@}%
{12\p@ \@plus 6\p@ \@minus 3\p@}%
{\p@}%
{\bfseries\boldmath}}
\theoremstyle{plain}
\newtheorem{theorem}{Theorem}
\newtheorem{lemma}{Lemma}
\newtheorem{claim}{Claim}
\theoremstyle{nonumberplain}
\newtheorem{proof}{\bf Proof.}
\title{\bf On distance-regular Cayley graphs of generalized dicyclic groups}
\author{{Xueyi Huang$^{a, b}$, Kinkar Chandra Das$^{b,}$\footnote{Corresponding author.}\setcounter{footnote}{-1}\footnote{\emph{E-mail address:} huangxymath@163.com (X. Huang), kinkardas2003@gmail.com (K. C. Das).}}\\[2mm]
\small $^a$School of Mathematics, East China University of Science and Technology, \\
\small  Shanghai 200237, P.R. China\\
\small $^b$Department of Mathematics, Sungkyunkwan University,\\
\small Suwon 16419, Republic of Korea
}
\date{}
\begin{document}
\maketitle

\begin{abstract}
Let $G$ be a generalized dicyclic group with identity $1$. An inverse closed  subset $S$ of $G\setminus\{1\}$ is called minimal if $\langle S\rangle=G$ and there exists some $s\in S$ such that $\langle S\setminus\{s,s^{-1}\} \rangle\neq G$. In this paper, we characterize distance-regular Cayley graphs $\mathrm{Cay}(G,S)$ of $G$ under the condition that $S$ is minimal.

\par\vspace{2mm}

\noindent{\bfseries Keywords:} Distance-regular graph, Cayley graph, Generalized dicyclic group
\par\vspace{1mm}

\noindent{\bfseries 2010 MSC:} 05C25
\end{abstract}

\section{Introduction}

Let $G$ be a group with identity $1$, and  let $S$ be an inverse closed subset of $G\setminus\{1\}$. The \textit{Cayley graph}  of $G$ with respect to  $S$, denoted by $\mathrm{Cay}(G,S)$,  is the graph with vertex set $G$ in which  two vertices $g,h\in G$ are adjacent if and only if there exists some $s\in S$ such that $g=hs$. Here $S$ is called the \textit{connection set} of $\mathrm{Cay}(G,S)$. Clearly, $\mathrm{Cay}(G,S)$ is a regular graph which is  connected if and only if  $\langle S\rangle=G$. 
If $\langle S\rangle=G$ and there exists some $s\in S$ such that  $\langle S\setminus\{s,s^{-1}\}\rangle\neq G$, then we say that  $S$ is \textit{minimal} (with respect to $s$). Furthermore, it is easy to see that  the action of $G$ on itself by left multiplication gives an automorphism subgroup of $\mathrm{Cay}(G,S)$. Thus, for any proper subgroup $H$ of $G$ and for any  $g_1,g_2\in G$, the subgraphs of $\mathrm{Cay}(G,S)$ induced by $g_1H$ and $g_2H$ are isomorphic.

Let $\Gamma$ be a connected graph with vertex set $V(\Gamma)$ and edge set $E(\Gamma)$. The length of a shortest path between two vertices $u$ and $v$ of $\Gamma$ is called the \textit{distance} between $u$ and $v$, and denoted by  $\partial_\Gamma(u,v)$. The \textit{diameter} of $\Gamma$ is defined as $d_\Gamma=\max\{\partial_\Gamma(u,v):u,v\in V(\Gamma)\}$. For any $v\in V(\Gamma)$, let $N_i^\Gamma(v)$ denote the set of vertices  in $\Gamma$ which are at distance $i$ from $v$. In particular, we denote $N^\Gamma(v)=N_1^\Gamma(v)$. When $\Gamma$ is clear from the context, we use  $\partial$, $d$, $N_i$ and $N$ instead of  $\partial_\Gamma$, $d_\Gamma$, $N_i^\Gamma$ and $N^\Gamma$, respectively.

Let  $\Gamma$ be a connected graph with diameter $d$. For any $u,v\in V(\Gamma)$ with $\partial(u,v)=i$ ($0\leq i\leq d$), we denote
$$
c_i(u,v)=|N_{i-1}(u)\cap N(v)|,~~ a_i(u,v)=|N_{i}(u)\cap N(v)|, ~~ b_i(u,v)=|N_{i+1}(u)\cap N(v)|.
$$
Here we take $c_0(u,v)=b_d(u,v)=0$. If $c_i(u,v)$, $b_i(u,v)$ and $a_i(u,v)$ do not depend on the choice of $u,v$ with $\partial(u,v)=i$ (that is, depend only on the distance $i$ between $u$ and $v$) for all $0\leq i\leq d$, then we say that $\Gamma$ is a \textit{distance-regular graph}.

For a distance-regular graph $\Gamma$ with diameter $d$, we set  $c_i=c_i(u,v)$, $a_i=a_i(u,v)$ and $b_i=b_i(u,v)$, where $u,v\in V(\Gamma)$ with $\partial(u,v)=i$. Clearly, $\Gamma$ is a regular graph with valency $k=b_0$, and
$a_i+b_i+c_i=k$ for $0\leq i\leq d$.  The numbers $a_i$, $b_i$, $c_i$ ($0\leq i\leq d$) are called the \textit{intersection numbers} of $\Gamma$. Note that, in the graph $\Gamma$,  every pair of adjacent vertices have $a_1$ common neighbors, and every pair of vertices at distance $2$ have $c_2$ common neighbors. In particular, if $d=2$ then $\Gamma$ is also called a \textit{strongly regular graph}, that is, a connected regular graph such that the number of common neighbors of two distinct vertices depends only on whether these vertices are adjacent or not. 

As the generalization of  distance-transitive graphs, the concept of distance-regular graphs was introduced by Biggs  (see the monograph \cite{B74}  from 1974). In the past half century, it was found  that distance-regular graphs not only have many important applications in design theory and coding theory, but are also closely related to some other subjects, such as finite group theory, representation theory, and association schemes. For more detailed results on distance-regular graphs, we refer the reader to the famous monograph by Brouwer, Cohen and Neumaier \cite{BCN89}, and the nice survey paper by van Dam, Koolen and Tanaka \cite{DKT16}.

The research of distance-regular Cayley graphs originated from the investigation of regular partial difference sets. Such kinds of sets  are actually equivalent to strongly regular Cayley graphs \cite{M94}. A classic work on this topic is a characterization of strongly regular Cayley graphs of cyclic groups, which was achieved by Bridges and Mena \cite{BM79}, Ma \cite{M84}, and partially by Maru\v{s}i\v{c} \cite{M89}. Also,  strongly regular Cayley graphs of $\mathbb{Z}_{p^n}\times \mathbb{Z}_{p^n}$ were classified by Leifman and Muzychuck \cite{LM05}. However, as we know, strongly regular Cayley graphs of general groups, even for abelian groups, are  far from being completely classified.

With regard to  distance-regular  graphs, Miklavi\v{c} and Poto\v{c}nik \cite{MP03,MP07} (almost) classified  distance-regular Cayley graphs of cyclic groups and dihedral groups. Miklavi\v{c} and \v{S}parl \cite{MS14,MS20} characterized  distance-regular Cayley graphs of abelian groups and generalized dihedral groups under the condition that the corresponding  connection set is minimal. Abdollahi, van Dam and Jazaeri \cite{ADJ17} determined distance-regular Cayley graphs of diameter at most three with least eigenvalue $-2$. Very recently, van Dam and Jazaeri \cite{DJ19} determined the distance-regular Cayley graphs with valency at most $4$, the Cayley graphs among the distance-regular graphs with known putative intersection arrays for valency $5$, and the Cayley graphs among all distance-regular graphs with girth $3$ and valency $6$ or $7$.  In addition, they also studied bipartite distance-regular Cayley graphs with diameter  $3$ or $4$ \cite{DJ22}.

In this paper, inspired by the work of Miklavi\v{c} and \v{S}parl \cite{MS14,MS20}, we focus on characterizing distance-regular Cayley graphs of generalized dicyclic groups under the condition that the corresponding connection set is minimal.  Let $A$ be an abelian group of order $2n$ ($n>1$) with exactly one involution $\alpha$, and let $G$ be the \textit{generalized dicyclic group} generated by $A$ and $t$ where $t^2=\alpha$ and $t^{-1}xt=x^{-1}$ for all $x\in A$ (see \cite[p. 229]{M94} or \cite[p. 392]{S57}). 
Clearly, $G$ is a non-abelian group of order $4n$, and $\alpha$ is the unique element of order $2$ in $G$. The main result is as follows.

\begin{theorem}\label{thm::main}
Let $G$ be a generalized dicyclic group, and let $S$ be an inverse closed subset of $G\setminus\{1\}$ which generates $G$ and for which there exists some $s\in S$ such that $\langle S\setminus \{s,s^{-1}\}\rangle\neq G$. Then  $\mathrm{Cay}(G,S)$ is distance-regular if and only if it is isomorphic to $K_{4,4}$, the complete bipartite graph on eight vertices with two parts of equal size.
\end{theorem}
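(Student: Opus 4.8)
The plan is to combine the linear-algebraic constraints coming from the index-$2$ abelian subgroup $A\le G$ with the combinatorial constraints coming from the minimality of $S$.

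\textbf{Setup.} Fix $s\in S$ witnessing minimality, put $H:=\langle S\setminus\{s,s^{-1}\}\rangle\subsetneq G$ and $T:=S\setminus\{s,s^{-1}\}=S\cap H$ (note $s\notin H$, otherwise $\langle S\rangle=H\ne G$). Since the unique involution $a=t^{2}$ is central, every nontrivial subgroup of $A$ is normal in $G$; in particular $N:=H\cap A\trianglelefteq G$, which lets one work coset-wise. On each coset of $H$ the graph $\Gamma:=\mathrm{Cay}(G,S)$ induces a copy of $\Delta:=\mathrm{Cay}(H,T)$, and the (at most two) remaining connection elements $s,s^{-1}$ link distinct copies; with $m:=[G:H]\ge 2$ this exhibits $\Gamma$ either as $\Delta\,\square\,K_{2}$ (the degenerate case $s=s^{-1}$, which forces $s=a$ and $m=2$) or as an $m$-fold cyclic cover of $\Delta$ over a cycle $C_{m}$. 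Two cheap facts round off the setup: $|S|\ge 4$ (a non-abelian group with a unique involution has no connected Cayley graph of valency $\le 3$, since valency $\le 3$ forces the group to be cyclic), so $\Delta$ has valency $|S|-2\ge 2$ or $|S|-1\ge 3$; and $H$ is itself abelian or a (properly) generalized dicyclic group.

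\textbf{Eigenvalues.} Because $A$ has index $2$, every irreducible complex representation of $G$ is monomial of degree $1$ or $2$, so the spectrum of $\Gamma$ is explicit in terms of the characters $\psi\in\widehat{A}$: writing $S\setminus A=Xt$ with $X\subseteq A$, the eigenvalues are governed by the real numbers $\psi(S\cap A)$ and $|\psi(X)|$. A distance-regular graph of diameter $d$ has exactly $d+1$ distinct eigenvalues, so the map $\psi\mapsto(\psi(S\cap A),|\psi(X)|)$ takes very few values over $\widehat{A}$. As in Miklavi\v{c}--\v{S}parl, this forces $S\cap A$ to be a union of cosets of some fixed subgroup of $A$ (at the extremes $\emptyset$, $\{a\}$, $A\setminus\{1\}$, $A\setminus\{1,a\}$, or a subgroup minus $1$), and likewise strongly constrains the single ``free'' block $X$.

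\textbf{Finishing.} Feeding the resulting short list of shapes of $(S\cap A,X)$ into the cover picture and running the standard distance-regular bookkeeping ($a_{i}+b_{i}+c_{i}=k$; direct counts of $a_{1}$ and $c_{2}$ inside $\Gamma$ from the copies of $\Delta$ and the matchings; interlacing of $\Delta$ in $\Gamma$) should reduce the possibilities to a handful of intersection arrays. For each candidate, imposing that the ambient group genuinely be generalized dicyclic --- in particular have a \emph{unique} involution, which makes the Sylow $2$-subgroup of $A$ cyclic and restricts the orders of the elements of $S$ --- should eliminate everything except $\{4,3;1,4\}$, realised by $\mathrm{Cay}(Q_{8},\,Q_{8}\setminus\langle i\rangle)\cong K_{4,4}$; the converse (that this graph is distance-regular with minimal connection set) is a direct check. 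When an induced copy $\Delta=\mathrm{Cay}(H,T)$ happens to inherit enough regularity one can also shortcut via the known classifications over abelian and generalized dihedral groups, or via induction on $|G|$ when $H$ is itself generalized dicyclic.

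\textbf{Main obstacle.} I expect the genuine difficulty to be keeping the character-sum bookkeeping tight when $[G:H]$ is small, so that $H$ (hence $\Delta$) is large and not obviously regular: there the combinatorial cover picture yields relatively little and essentially everything must come from the eigenvalue constraints. A clean way to organise this endgame is probably to prove first that $\Gamma$ must be bipartite --- exploiting the unique-central-involution structure of $G$ to exclude odd girth and the non-bipartite strongly-regular parameter families --- and then to show that a bipartite distance-regular graph admitting an index-$2$ abelian group of translations and a unique central involution can only be $K_{4,4}$.
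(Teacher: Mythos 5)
Your setup is sound and matches the paper's skeleton: the coset decomposition of $G$ by $H=\langle S\setminus\{s,s^{-1}\}\rangle$, the observation that each vertex has at most two neighbours outside its own $H$-coset, the cover/Cartesian-product picture (the paper uses exactly this, via the Miklavi\v{c}--\v{S}parl lemma on distance-regular Cartesian products, to kill the case $[G:H]=3$, $o(s)=3$), and the identification of $K_{4,4}\cong\mathrm{Cay}(Q_8,\{\pm j,\pm k\})$ as the unique survivor. But the entire elimination argument --- which is the actual content of the theorem --- is deferred to assertions of the form ``should reduce the possibilities'' and ``should eliminate everything except.'' The pivotal claim in your Eigenvalues paragraph, that having at most $d+1$ distinct values of $\psi\mapsto(\psi(S\cap A),|\psi(X)|)$ forces $S\cap A$ to be a union of cosets of a subgroup of $A$, is not justified and is not how Miklavi\v{c}--\v{S}parl argue either: results of that Bridges--Mena type need rationality/integrality input and substantial extra work, and in any case such a conclusion would not by itself use the minimality of $S$, which is the hypothesis that makes the theorem true. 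The proof that actually works runs entirely through minimality: it gives that every vertex of a coset $gH$ has \emph{exactly} two neighbours outside $gH$ (namely $gs^{\pm1}$ or $gt^{\pm1}$), from which one pins down $a_1\in\{0,2\}$ and $c_2\in\{2,4\}$ by listing the possible common neighbours of explicit pairs such as $(1,t)$, $(1,t^2)$, $(1,s^2)$, $(1,stx)$, and then bounds $o(s)$ and $|S|$ by iterating these counts. None of that bookkeeping appears in your proposal, so what you have is a plausible plan with its hardest step missing, not a proof.

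Two smaller points. First, your ``degenerate case $s=s^{-1}$'' cannot occur: if $s\in A$ had order $2$ then $s=a=(tx)^2$ for any $tx\in S\cap tA$ (and $S\cap tA\neq\emptyset$ since $\langle S\rangle=G$), so $s\in\langle S\setminus\{s\}\rangle$, contradicting minimality; and every element of $tA$ has order $4$. So the $\Delta\,\square\,K_2$ branch should be eliminated at the outset rather than carried along. Second, your strategy of first proving bipartiteness is plausible but also unproven; the paper never establishes bipartiteness directly, instead showing in every branch that either a contradiction arises from $a_1$ and $c_2$ or $S$ collapses to a four-element set generating a group of order $8$ with $\Gamma\cong K_{4,4}$. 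If you want to salvage the spectral route, you would need to actually derive and exploit the eigenvalue formulas for the degree-$2$ induced representations together with minimality; as written, the argument has a genuine gap exactly where the difficulty lies.
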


\section{Proof of Theorem \ref{thm::main}}

In this section, we give the proof of Theorem \ref{thm::main}. Before doing this, we need a result regarding the characterization of distance-regular graphs which can be decomposed into the Cartesian products of two smaller graphs.

Given two graphs $\Gamma_1$ and $\Gamma_2$, the \textit{Cartesian product} $\Gamma_1\square \Gamma_2$ is the graph with vertex set $V(\Gamma_1)\times V(\Gamma_2)$ in which two vertices $(u_1,v_1),(u_2,v_2)\in V(\Gamma_1)\times V(\Gamma_2)$ are adjacent if and only if $u_1=u_2$ and $v_1,v_2$ are adjacent in $\Gamma_2$, or $v_1=v_2$ and $u_1,u_2$ are adjacent in $\Gamma_1$. It is known that if  a nontrivial connected graph is a Cartesian product, it can be factorized uniquely as a Cartesian product of prime factors, graphs that cannot themselves be decomposed as Cartesian products of graphs \cite{S60,V63}.

For positive integers $d$ and $q$, the \textit{Hamming graph} $H(d,q)$ is  the Cartesian product of $d$ copies of the complete graph $K_q$.  For a nonnegative integer $n$ and  a positive integer $m$, the \textit{Doob graph} $D(n,m)$ is the Cartesian product of $H(n,4)$ with $m$ copies of the Shrikhande graph $\mathrm{Cay}(\mathbb{Z}_4\times \mathbb{Z}_4,\{\pm (1,0),\pm (0,1),\pm (1,1)\})$. Here for the case $n=0$ we just take the Cartesian product of $m$ copies of the Shrikhande graph. It is known that both $H(d,q)$ and $D(n,m)$ are distance-regular graphs (cf. \cite{BCN89}).

In \cite{MS20},  Miklavi\v{c} and \v{S}parl  gave the following characterization for distance-regular Cartesian products based on a result of Stevanovi\'{c} \cite{S04}.

\begin{lemma}[\cite{MS20}]\label{lem::CP}
Let $\Gamma=\Gamma_1\square \Gamma_2$, where $\Gamma_1$ and $\Gamma_2$ are nontrivial graphs. If $\Gamma$ is distance-regular, then $\Gamma$ is isomorphic to a Hamming graph $H(d,q)$ or to a Doob graph $D(n,m)$.
\end{lemma}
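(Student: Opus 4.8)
\textbf{Proof proposal for Lemma \ref{lem::CP}.}

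The plan is to reduce the statement to Stevanovi\'{c}'s theorem \cite{S04}, which asserts precisely that a connected distance-regular Cartesian product must be a Hamming graph or a Doob graph. Since the hypothesis already gives $\Gamma=\Gamma_1\square\Gamma_2$ with both factors nontrivial and $\Gamma$ distance-regular, the bulk of the work is to verify the structural facts that make Stevanovi\'{c}'s classification applicable, and then to assemble the conclusion. First I would observe that a distance-regular graph is by definition connected, so $\Gamma$ is a connected Cartesian product; by the unique prime factorization of connected Cartesian products \cite{S60,V63}, $\Gamma$ decomposes uniquely into prime factors, and the given factorization $\Gamma_1\square\Gamma_2$ refines into this prime factorization.

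Next I would invoke the two standard rigidity properties of distance-regular graphs under Cartesian products. One is that the distance in a Cartesian product is additive, $\partial_\Gamma\big((u_1,v_1),(u_2,v_2)\big)=\partial_{\Gamma_1}(u_1,u_2)+\partial_{\Gamma_2}(v_1,v_2)$, so that each factor isometrically embeds into $\Gamma$; the second is that when $\Gamma$ is distance-regular, each prime factor must itself be distance-regular with a very restricted intersection array. I would then appeal to Stevanovi\'{c}'s result \cite{S04}, which pins down the admissible prime factors: the only connected distance-regular graphs that occur as prime factors of a distance-regular Cartesian product are complete graphs $K_q$ and the Shrikhande graph. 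Grouping the complete-graph factors according to their common order $q$ and counting the Shrikhande factors, the product is forced to take the form $H(d,q)$ when no Shrikhande factor appears, or $D(n,m)$ when $m\geq 1$ copies of the Shrikhande graph appear alongside an $H(n,4)$ part.

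The main obstacle is the compatibility constraint among distinct prime factors. A priori one could try to combine complete graphs of different orders, or complete graphs of order $q\neq 4$ with Shrikhande factors; the heart of Stevanovi\'{c}'s argument is to rule these out by comparing intersection numbers across factors. Concretely, in a Cartesian product the valency is the sum of the factor valencies while the intersection number $c_2$ and the local structure mix information from all factors, and demanding that these be independent of the chosen pair of vertices forces all complete-graph factors to share the same order $q$, and forces that order to equal $4$ precisely when a Shrikhande factor is present (since the Shrikhande graph is itself a distance-regular graph on $K_4\square K_4$-sized local data, cospectral with $H(2,4)$). Since this compatibility analysis is exactly the content of the cited result, I would not reprove it but rather cite \cite{S04} and record the conclusion: the only possibilities are $H(d,q)$ and $D(n,m)$, as claimed. \hfill\qedhere
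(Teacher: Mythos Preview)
Your proposal is correct in spirit and matches the paper's treatment: the paper does not prove this lemma at all but simply cites it from \cite{MS20}, noting explicitly that it is ``based on a result of Stevanovi\'{c} \cite{S04}''. Your reduction to Stevanovi\'{c}'s classification is therefore exactly the intended route, and the surrounding remarks you make (connectedness, unique prime factorization via \cite{S60,V63}, additivity of distance, and the compatibility constraints forcing all $K_q$ factors to share the same $q$, with $q=4$ when Shrikhande factors appear) are the standard ingredients of that argument. Since the paper treats the lemma as a black-box citation, your sketch in fact supplies more detail than the paper does; there is no discrepancy to flag.
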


Let $G$ be a group. For any $g\in G$, we denote by $o(g)$ the order of $g$, and for any subgroup $H\leq G$, we denote by $[G:H]$ the index of $H$ in $G$, i.e., the number of left cosets of $H$ in $G$.

Now we begin to prove Theorem \ref{thm::main}. For the sake of convenience, we keep the following notation for the remaining part of this section.
{\flushleft\bf Notation.} Let $A$ be an abelian group of order $2n$ ($n>1$) with exactly one involution $\alpha$, and let $G$ be the generalized dicyclic group generated by $A$ and $t$ where $t^2=\alpha$ and $t^{-1}xt=x^{-1}$ for all $x\in A$. Let $S$ be an inverse-closed subset of $G\setminus\{1\}$ with $\langle S\rangle=G$  for which there exists $s\in S$ such that $H=\langle S\setminus\{s,s^{-1}\} \rangle$ is a proper subgroup of $G$. Assume that $\Gamma=\mathrm{Cay}(G,S)$ is distance-regular of diameter $d$ with intersection numbers $a_i$, $b_i$, $c_i$ ($0\leq i\leq d$). Let $\Gamma'=\mathrm{Cay}(H,S\setminus\{s,s^{-1}\})$.

As $n>1$, it is clear that $\{s,s^{-1}\}$ cannot generate $G$.  Thus $|S|\geq 3$. Also note that $G=A\cup tA$.  We divide  our discussion into the following  two parts.

\subsection{The case $s\in tA$}

Note that for any  $x,y\in A$, $(tx)^2=t^2=\alpha$ and $(tx)^{-1}\cdot y\cdot tx=y^{-1}$. Thus $G=\langle A,t\rangle=\langle A,tx\rangle$ for any $x\in A$. For this reason, we may assume that  $s=t$.

The following lemma is straightforward.
\begin{lemma}\label{lem::tA}
Let $g\in G$. Then each vertex  $x\in gH$ has exactly two neighbors outside $gH$,  namely $xt$ and $xt^{-1}$.
\end{lemma}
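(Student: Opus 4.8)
The plan is to argue directly from the adjacency rule of the Cayley graph, with no machinery beyond the definitions. Recall that, under the convention $g\sim h\iff g=hs'$ for some $s'\in S$, the neighborhood of a vertex $x\in G$ is exactly $xS=\{xs':s'\in S\}$, and this relation is symmetric because $S$ is inverse closed. Since $s=t\in S$ and $S=S^{-1}$, I would write $S=(S\setminus\{t,t^{-1}\})\cup\{t,t^{-1}\}$, so that the neighbors of $x$ split into the set $x(S\setminus\{t,t^{-1}\})$ together with the two elements $xt$ and $xt^{-1}$.

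Next I would show that the first block lies inside $gH$ and that nothing else does. As $x\in gH$ we have $xH=gH$, and since $S\setminus\{t,t^{-1}\}\subseteq H=\langle S\setminus\{t,t^{-1}\}\rangle$, it follows that $x(S\setminus\{t,t^{-1}\})\subseteq xH=gH$. For the two remaining neighbors, note that $xt\in gH$ is equivalent to $g^{-1}xt\in H$; because $g^{-1}x\in H$ (from $x\in gH$), this is equivalent to $t\in H$. But if $t\in H$ then also $t^{-1}\in H$, whence $S=(S\setminus\{t,t^{-1}\})\cup\{t,t^{-1}\}\subseteq H$ and so $G=\langle S\rangle\subseteq H$, contradicting that $H$ is a proper subgroup. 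Hence $t\notin H$, and the same computation gives $xt^{-1}\notin gH$. Therefore the only neighbors of $x$ lying outside $gH$ are among $xt$ and $xt^{-1}$, and both of these are genuinely neighbors of $x$ since $t,t^{-1}\in S$.

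Finally I would check that $xt\neq xt^{-1}$, which is equivalent to $t^2\neq 1$; this holds because $t^2=a$ is the (unique) involution of $G$, so in particular $a\neq 1$. Combining these observations, $x$ has exactly two neighbors outside $gH$, namely $xt$ and $xt^{-1}$. There is no real obstacle in this argument; the only two points that each need a line of justification are the fact $t\notin H$ (which is exactly where the minimality hypothesis on $S$ and the properness of $H$ enter) and the distinctness $xt\neq xt^{-1}$ (which uses $t^2=a\neq 1$, i.e., the defining property of the generalized dicyclic group).
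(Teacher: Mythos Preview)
Your argument is correct and is exactly the natural direct verification from the definitions; the paper itself gives no proof at all, simply declaring the lemma ``straightforward'' after the Notation block fixes $s=t$ and $H=\langle S\setminus\{t,t^{-1}\}\rangle$. Your two small justifications (that $t\notin H$ via $G=\langle S\rangle$, and that $xt\neq xt^{-1}$ via $t^2=a\neq 1$) are precisely the points one would fill in, and there is nothing more to add.
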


We consider the following two situations.
{\flushleft \bf Case A.}  $t^2=\alpha\in H$.
{\flushleft \bf Subcase A.1.}  $S\setminus\{t,t^{-1}\}\subseteq A$.

First suppose that $t^2=\alpha\not\in S$. Consider the vertices $1\in H$ and $t\in tH$. By Lemma \ref{lem::tA}, the only possible common neighbors of $1$ and $t$ are $t^{-1}$ and $t^2$, and $t^{-1}\in N(1)\cap N(t)$ if and only if $t^2\in N(1)\cap N(t)$. However, since $t^2\not\in S$, the vertices $1$ and $t^2$ cannot be adjacent. Thus $N(1)\cap N(t)=\emptyset$, and $a_1=0$. As $|S|\geq 3$, $S\setminus\{t,t^{-1}\}\neq \emptyset$. For any $x\in S\setminus\{t,t^{-1}\}=S\cap A$,  we have $\partial(x,t)=2$ because $tx$ and $t^{-1}x=(tx)^{-1}$ cannot be contained in $S$ due to  $x\neq t^2$. Note that $x\in H$ and $t\in tH$. Again by Lemma \ref{lem::tA}, we see that $N(x)\cap N(t)=\{1,xt\}$ when $xt^2\not\in S$, and $N(x)\cap N(t)=\{1,xt,t^2,xt^{-1}\}$ when $xt^2\in S$. Thus $c_2=2$ or $4$. If $c_2=2$, then $xt^2\not\in S$ for any $x\in S\setminus\{t,t^{-1}\}$.  Pick $x\in S\setminus\{t,t^{-1}\}$. Clearly, $x^2\neq 1$, since otherwise we must have $x=\alpha=t^2\not\in S$ because $\alpha$ is the unique involution of $G$, a contradiction. Consider the two vertices $1,x^2\in H$. Observe that $\partial(1,x^2)=2$ because   $a_1=0$ and $x\in N(1)\cap N(x^2)$. Also, $x^2\neq t^2$, since otherwise we have $\{x,t,t^{-1}\}\subseteq N(1)\cap N(x^2)$, contrary to $c_2=2$. Moreover, we assert that  $1$ and $x^2$ have no common neighbors outside $H$, since otherwise it follows from Lemma \ref{lem::tA} that $tx^2\in S$, contrary to the assumption that $S\setminus \{t,t^{-1}\}\subseteq A$. Therefore, there exists exactly one $y\in S\setminus\{t,t^{-1},x\}$ which is the common neighbor of $1$ and $x^2$. Note that $\partial(x,y)=2$. We claim that $y=x^{-1}$, since otherwise we obtain $c_2\geq 3$ by observing that  $\{1,xy,x^2\}\subseteq N(x)\cap N(y)$,  a contradiction. Thus $N(1)\cap N(x^2)=\{x,x^{-1}\}$, and $x^3\in S$. This implies that $x^{-2}\in N(x)\cap N(x^{-1})$, and so $x^4=1$ because $\{1,x^2\}\subseteq N(x)\cap N(x^{-1})$, $x^{-2}\neq 1$, $\partial(x,x^{-1})=2$ and $c_2=2$.  Then $o(x^2)=2$, and we have $x^2=\alpha=t^2$ because $\alpha$ is the unique involution of $G$. However, this is impossible by above arguments. If $c_2=4$, then $xt^2\in S$ for each $x\in S\setminus\{t,t^{-1}\}$. As $S\setminus\{t,t^{-1}\}\neq \emptyset$ and $t^2=\alpha\not\in S$, we conclude that $|S\setminus\{t,t^{-1}\}|\geq 2$. Consider the two vertices $1,t^2\in H$. Note that $\partial(1,t^2)=2$. Pick two distinct $x,y\in S\setminus\{t,t^{-1}\}=S\cap A$. We claim that $x^{-1}y=t^2=y^{-1}x$, since otherwise it follows from $\{x,xt^2,y,yt^{2},t,t^{-1}\}\subseteq N(1)\cap N(t^2)$ that $|N(1)\cap N(t^2)|\geq 6$, contrary to $c_2=4$. Hence,  by the arbitrariness of $x,y\in S\setminus\{t,t^{-1}\}$, we conclude that $S\setminus\{t,t^{-1}\}=\{x,xt^2\}$. Then $x^{-1}=x$ or $x^{-1}=xt^2$ because $x^{-1}\in S\setminus\{t,t^{-1}\}$. If $x^{-1}=x$, then $o(x)=2$, and so $x=\alpha=t^2\not\in S$, a contradiction. If $x^{-1}=xt^2$, then $x^2=t^2=\alpha$ and $S\setminus\{t,t^{-1}\}=\{x,x^{-1}=x^3\}$. Therefore, $\Gamma=\mathrm{Cay}(G,\{x,x^{-1}=x^3,t,t^{-1}\})\cong K_{4,4}$.

Now suppose that $t^2=\alpha\in S$. Consider the vertices $1\in H$ and $t\in tH$. By Lemma \ref{lem::tA}, it is easy to see that $N(1)\cap N(t)=\{t^2,t^{-1}\}$. As $1$ and $t$ are adjacent, we have $a_1=2$. Clearly,  $|S|\geq 4$, since $\{t,t^{-1},t^2=\alpha\}$ cannot generate $G$ due to $n>1$. Pick $x\in S\setminus\{t,t^{-1},t^2=\alpha\}$. Then $x\in H\leq A$ and $\partial(x,t)=2$. Again by Lemma \ref{lem::tA}, the two neighbors of $x$ outside $H$ are $xt=tx^{-1}\in tH$ and $xt^{-1}=t^{-1}x^{-1}\in t^{-1}H=tH$, and the two neighbors of $t$ outside $tH$ are $1\in H$ and $t^2\in H$. Thus $N(x)\cap N(t)=\{1,tx^{-1}\}$ or $\{1,tx^{-1}, t^2, t^{-1}x^{-1}\}$. We assert that the later case cannot occur. In fact, if $t^2\in N(x)$, then $\{x,xt^2,t,t^{-1}\}\subseteq N(1)\cap N(t^2)$, contrary to $a_1=2$ because $1$ and $t^2$ are adjacent. Thus $c_2=2$, and every pair of vertices with distance at most $2$ have exactly two common neighbors. Note that $x^2\neq 1$ because $x\neq t^2=\alpha$. Since  $x\in N(1)\cap N(x^2)$, we have $\partial(1,x^2)\leq 2$, and $|N(1)\cap N(x^2)|=2$.  Let $y$ be the remaining common neighbor of $1$ and $x^2$ other than $x$. Note that $x^2\neq t^2$ because $x\not\in N(1)\cap N(t^2)=\{t,t^{-1}\}$. According to Lemma \ref{lem::tA}, this implies that $y\in S\setminus\{t,t^{-1}\}\subseteq A$. Then from $\partial(x,y)\leq 2$, $\{1,x^2,xy\}\subseteq N(x)\cap N(y)$ and $a_1=c_2=2$,  we can deduce that  $y=x^{-1}$,  $N(x)\cap N(x^{-1})=\{1,x^2\}$ and $x^3\in S$. It follows that $x^{-2}$ is also a common neighbor of $x$ and $x^{-1}$, and so we must have $x^{-2}=x^2$, i.e., $o(x^2)=2$. Therefore,  $x^2=\alpha=t^2$, which is impossible by above arguments.

{\flushleft \bf Subcase A.2.}  $(S\setminus\{t,t^{-1}\})\cap  tA\neq \emptyset$.

Pick $tx\in (S\setminus\{t,t^{-1}\})\cap  tA$. Then $x\in A$, $x\neq t^2=\alpha$ and $t^2=txtx\in H$. We claim that $t^2=\alpha\not\in S$. Indeed, if $t^2\in S$, by Lemma \ref{lem::tA}, we have $N(1)\cap N(t)=\{t^2,t^{-1}\}$, which leads to $a_1=2$ because $1$ and $t$ are adjacent. On the other hand, since $\{t,t^{-1},tx\} \subseteq N(1)\cap N(t^2)$, we get $a_1\geq 3$, a contradiction. Therefore, we have $t^2\not\in S$, $N(1)\cap N(t)=\emptyset$ and $a_1=0$. Note that $t^{-1}x=(tx)^{-1}\in S\setminus\{t,t^{-1}\}$ and $t^{-1}xt^{-1}x=t^2$. We have $\{t,t^{-1},tx,t^{-1}x\}\subseteq N(1)\cap N(t^2)$, and so  $c_2\geq 4$ because $\partial(1,t^2)=2$. Now we shall prove that  $S\cap A=\emptyset$. By contradiction, assume that $S\cap A\neq \emptyset$. Pick $y\in S\cap A$. Clearly,  $y\neq t^2$, and so $\partial(y,t)=2$ by Lemma \ref{lem::tA}. Note that the only possible common neighbors of $y\in H$ and $t\in tH$ are $1$, $t^2$, $yt=ty^{-1}$ and $yt^{-1}=t^{-1}y^{-1}$. From $c_2\geq 4$ we can deduce that $N(y)\cap N(t)=\{1,t^2,yt,yt^{-1}\}$ and $c_2=4$. But then we have $\{y,tx,t^{-1}x,t,t^{-1}\}\subseteq N(1)\cap N(t^2)$, which is impossible because $c_2=4$. Hence, $S\cap A=\emptyset$. Note that $\partial(t,tx)=2$ because $t,tx\in S$ and $a_1=0$.  Since $c_2\geq 4$ and the only possible common neighbors of $t\in tH$ and $tx\in H$ are $1$, $t^2$, $txt^{-1}=x^{-1}$ and $txt=t^2x^{-1}$, we have $N(t,tx)=\{1,t^2,x^{-1},t^2x^{-1}\}$ and $c_2=4$. This implies that $t$ is adjacent to $x^{-1}$, and therefore, $\{tx^{-1},t^{-1}x^{-1}=(tx^{-1})^{-1}\}\subseteq (S\setminus\{t,t^{-1}\})\cap  tA$. Then  $\{t,t^{-1},tx,t^{-1}x,tx^{-1},t^{-1}x^{-1}\}\subseteq N(1)\cap N(t^2)$, and it follows from $c_2=4$ and $x\neq t^2=\alpha$ that  $tx=t^{-1}x^{-1}$, i.e., $x^2=t^2=\alpha$.  Next we assert that $(S\setminus\{t,t^{-1}\})\cap  tA=\{tx,t^{-1}x=tx^{-1}\}$. If not, there exists some $y\in A\setminus\{x,x^{-1}\}$ such that $ty\in (S\setminus\{t,t^{-1}\})\cap  tA$. By above arguments, we have $y^2=t^2=\alpha$, and  $xyxy=x^2y^2=\alpha^2=1$. Thus $o(xy)=1$ or $o(xy)=2$. However, both of them are impossible because the first one implies $y=x^{-1}$ and the second one implies  $xy=\alpha=x^2$, i.e., $y=x$. As $S\cap A=\emptyset$, we may conclude that $S=\{t,t^{-1},tx,t^{-1}x=tx^{-1}\}$ with $o(x)=4$, and  $\Gamma=\mathrm{Cay}(G,S)\cong K_{4,4}$.

{\flushleft \bf Case B.}  $t^2=\alpha\not\in H$.

In this situation, we assert that $S\setminus\{t,t^{-1}\}\subseteq A$. Indeed, if there exists some $tx\in (S\setminus\{t,t^{-1}\})\cap  tA$, then $t^2=txtx\in H$, contrary to our assumption. Since $t^2\not\in S$, as above, we have $N(1)\cap N(t)=\emptyset$ and $a_1=0$. By Lemma \ref{lem::tA}, the two neighbors of $1\in H$ (resp. $t^2\in t^2H$) outside $H$ (resp. $t^2H$) are $t\in tH$ and $t^{-1}\in t^{-1}H=t^3H$. Also note that $t^iH\neq t^jH$ for $0\leq i\neq j\leq 3$. Thus we have $N(1)\cap N(t^2)=\{t,t^{-1}\}$, and so $c_2=2$. Recall that $|S|\geq 3$. Pick $x\in S\setminus\{t,t^{-1}\}=S\cap A$. Clearly, $x^2 \neq 1$, since otherwise we can deduce that $x=\alpha=t^2\not\in S$, a contradiction. Then $\partial(1,x^2)=2$ because $x\in N(1)\cap N(x^2)$ and $a_1=0$. Let $y$ be the remaining common neighbor of $1$ and $x^2$ other than $x$. As above, we conclude that $y=x^{-1}$ (because $c_2=2$), $\partial(x,y)=2$, and $xy$ is also a common neighbor of $x$ and $y$. Then $x^{3}\in S$, and it follows that $x^{-2}=x^2$ because $x^{-2}$ is also a common neighbor of $x$ and $x^{-1}$. Therefore, we have $o(x^2)=2$, and  $t^2=\alpha=x^2\in H$, contrary to the assumption.

\subsection{The case $s\in A$}

In this part, the main method used in the proof is similar as that of \cite{MS20}.

First we claim that $o(s)>2$. Indeed, if $o(s)=2$, then $s=\alpha$. Since $S\cap tA\neq \emptyset$, we can take  $tx\in S\cap tA$ such that $txtx=t^2=\alpha=s$, contrary to the fact that  $S$ is  minimal with respect to $s$.  Furthermore, it is easy to see that $G$ is the disjoint union of the left cosets $s^iH$, where $0\leq i\leq [G:H]-1$, and that $o(s)$ is a multiple of $[G:H]$.
Also recall that for each $i$ the subgraph of $\Gamma$ induced by $s^iH$ is isomorphic to $\Gamma'$.

\begin{lemma}\label{lem::A}
The following statements hold.
\begin{enumerate}[(i)]
\item For each $h\in H$ and for each $0\leq i\leq [G:H]-1$, the vertex $s^ih\in s^iH$ has exactly two neighbors outside $s^iH$, namely $s^{i-1}h\in s^{i-1}H$ and $s^{i+1}h\in s^{i+1}H$ (here $s^{i-1}h\neq s^{i+1}h$ due to $o(s)>2$). Moreover, $s^{i-1}H=s^{i+1}H$ if and only if  $[G:H]=2$ and $o(s)\geq 4$.
\item $c_2\geq 2$.
\end{enumerate}
\end{lemma}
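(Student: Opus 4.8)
The plan is to settle both parts by elementary coset bookkeeping in $G$, using only the dicyclic relation $t^{-1}xt=x^{-1}$ together with the already-recorded structure of the cosets $s^iH$. Write $T:=S\setminus\{s,s^{-1}\}$; this set is inverse closed, generates $H$, and therefore equals $S\cap H$, so that $S\setminus H=\{s,s^{-1}\}$, $s\ne s^{-1}$ (because $o(s)>2$), and $|T|=|S|-2\ge 1$. The one computation I would isolate at the outset is that for every $h\in H$ one has $\{hs,hs^{-1}\}=\{sh,s^{-1}h\}$ and $h^{-1}s^{\pm1}h\in\{s,s^{-1}\}$: if $h\in A$ this is immediate since $A$ is abelian and contains $s$, and if $h=tz$ with $z\in A$ then $hs=tzs=tsz=s^{-1}tz=s^{-1}h$ and $hs^{-1}=tzs^{-1}=ts^{-1}z=stz=sh$, using $tx=x^{-1}t$ for all $x\in A$.

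For part (i), the neighbours of $g=s^ih$ are the elements $gx$ with $x\in S$. For $x\in T\subseteq H$ we have $gx=s^i(hx)\in s^iH$, so such neighbours stay inside the coset; for $x\in\{s,s^{-1}\}$ the identity above gives $\{gs,gs^{-1}\}=\{s^i(sh),s^i(s^{-1}h)\}=\{s^{i+1}h,s^{i-1}h\}$. These two vertices are distinct, since $s^{i-1}h=s^{i+1}h$ would force $s^{-1}=s$, and they both lie outside $s^iH$, since $s^{i\pm1}h\in s^iH$ would force $s^{\pm1}\in H$. Hence $g$ has exactly the two neighbours $s^{i-1}h$ and $s^{i+1}h$ outside $s^iH$. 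For the ``moreover'' clause, since the $[G:H]$ cosets $s^jH$ are distinct we have $s^{i-1}H=s^{i+1}H$ if and only if $[G:H]$ divides $2$, i.e.\ $[G:H]=2$ (recall $H$ is proper); and if $[G:H]=2$ then, as $[G:H]$ divides $o(s)$ and $o(s)>2$, we get $o(s)\ge 4$.

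For part (ii), the idea is to display explicitly a pair of vertices at distance $2$ with two evident common neighbours. I would choose $h_0\in T$ with $sh_0\notin S$ and consider the vertices $1$ and $sh_0$. Then $s$ is a common neighbour, since $s=1\cdot s$ and $s=(sh_0)\cdot h_0^{-1}$ with $h_0^{-1}\in T\subseteq S$; and $h_0$ is also a common neighbour, since $h_0=1\cdot h_0$ and $h_0=(sh_0)\cdot(h_0^{-1}s^{-1}h_0)$ with $h_0^{-1}s^{-1}h_0\in\{s,s^{-1}\}\subseteq S$. These two common neighbours are distinct as $h_0\in H$ but $s\notin H$, and $1$ is not adjacent to $sh_0$ because $sh_0\notin S$; hence $\partial(1,sh_0)=2$ (in particular $d\ge2$, so $c_2$ is meaningful) and $c_2\ge2$. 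What remains is to check that such an $h_0$ exists: if $sh_0\in S$ for every $h_0\in T$, then $sh_0\in S\setminus H=\{s,s^{-1}\}$, which forces $h_0=s^{-2}$ for all $h_0\in T$, hence $T=\{s^{-2}\}$ with $s^{-2}=s^2$ and $H=\langle s^2\rangle$; but then $G=\langle S\rangle=\langle s\rangle$ would be cyclic, contradicting that $G$ is non-abelian. This degenerate-case check is the only point that needs any thought; everything else is bookkeeping.
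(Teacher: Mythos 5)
Your proof is correct and follows essentially the same route as the paper: part (i) is the coset bookkeeping the paper declares obvious (which you spell out via $hs^{\pm1}\in\{sh,s^{-1}h\}$), and for part (ii) the paper likewise exhibits $s$ and $h$ as two common neighbours of the non-adjacent vertices $1$ and $sh$, choosing $h\in S\setminus\langle s\rangle$ where you choose $h_0\in S\cap H$ with $sh_0\notin S$ — an equivalent choice, with your existence check playing the role of the paper's appeal to minimality.
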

\begin{proof}
Since $s\in A$, and $o(s)>2$ is a multiple of $[G:H]$,  the statement in (i) is obvious. For (ii), we take $h\in S\setminus \langle s \rangle$.  Consider the vertices $1$ and $sh$. Clearly, $1$ and $sh$ are not adjacent because $S$ is minimal with respect to $s$ and $h\neq s^{-2}$. Then from $\{s,h\}\subseteq N(1)\cap N(sh)$ we obtain $\partial(1,sh)=2$ and $c_2\geq 2$.
\end{proof}

The remaining part of the proof consists of a series of claims.

\begin{claim}\label{claim::1}
$[G:H]=2$ and $o(s)\geq 4$.
\end{claim}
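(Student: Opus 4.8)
The plan is to reduce everything to the single assertion $[G:H]=2$, since then, as $o(s)>2$ and $o(s)$ is a multiple of $[G:H]$, $o(s)$ is even and exceeds $2$, forcing $o(s)\geq 4$. So I assume for contradiction that $m:=[G:H]\geq 3$. By \autoref{lem::A}(i) the coset partition $\{s^iH:0\leq i\leq m-1\}$ is such that each vertex of $s^iH$ has exactly one neighbour in each of the two distinct cosets $s^{i-1}H,s^{i+1}H$ (indices mod $m$) and no further neighbour outside $s^iH$; thus the quotient of $\Gamma$ by this partition is the $m$-cycle $C_m$, while the subgraph induced on each $s^iH$ is isomorphic to the connected graph $\Gamma'$. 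I will also use repeatedly that $a\in H$ — because $\langle S\rangle=G$ is non-abelian and $s\in A$, the set $S\setminus\{s,s^{-1}\}$ contains some $tx\in tA$, and then $(tx)^2=a\in H$ — and hence that $o(s)\neq 4$, since $m\geq 3$ gives $s^2\notin H$ whereas $o(s)=4$ would make $s^2$ the unique involution $a$.

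For $m\geq 4$ I would argue with the two vertices $1$ and $s^2$. Writing $N(1)=(S\cap H)\cup\{s,s^{-1}\}$ and $N(s^2)=s^2(S\cap H)\cup\{s^3,s\}$ and sorting the elements into cosets (using $s^j\notin H$ for $1\leq j\leq m-1$, and that $o(s)$ is a multiple of $m$ and $\neq 4$, hence $o(s)\geq 8$ when $m=4$), one checks that $s$ is the only common neighbour of $1$ and $s^2$. Therefore $\partial(1,s^2)=2$ and $c_2(1,s^2)=1$, contradicting \autoref{lem::A}(ii). So $m\leq 3$, and only $m=3$ survives.

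For $m=3$ the quotient $C_3$ is a triangle, the coset-distance estimate above becomes vacuous, and the argument splits on $o(s)$. If $o(s)=3$ then $s^3=1$, so the matchings between the three cosets fit together coherently and $\Gamma\cong\Gamma'\square K_3$; since $\Gamma'$ is a nontrivial connected graph, \autoref{lem::CP} makes $\Gamma$ a Hamming or a Doobs graph, and because $K_3$ is a Cartesian-prime factor of $\Gamma$ (a Doobs graph has only $K_4$ and the Shrikhande graph as prime factors), unique factorization forces $\Gamma\cong H(d,3)$; but then $|V(\Gamma)|=3^d$ is odd, contradicting $|V(\Gamma)|=|G|=4n$. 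If $o(s)\geq 6$ then $s^2\notin S$ (else $s^2\in S\setminus\{s,s^{-1}\}\subseteq H$, impossible as $m=3$), so again $\partial(1,s^2)=2$; a finer version of the count above gives $N(1)\cap N(s^2)\subseteq\{s,s^{-1},s^3\}$, with $s^{-1}$ and $s^3$ present exactly when $s^3\in S$. If $s^3\notin S$ then $c_2(1,s^2)=1$, again contradicting \autoref{lem::A}(ii). If $s^3\in S$ then $c_2(1,s^2)=3$, and I would finish by exhibiting a second distance-$2$ pair with a smaller count: taking $h\in S\setminus\langle s\rangle$, the vertices $1$ and $sh$ have $\{s,h\}$ among their common neighbours by \autoref{lem::A}(ii), and a direct inspection of the internal and external neighbours of $sh$ (using $hs=sh$ for $h\in A$ and $hs=s^{-1}h$ for $h\in tA$) should show these are the only ones, so $c_2(1,sh)=2\neq 3$.

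Most of this is routine bookkeeping — deciding which power of $s$ sits in which coset and intersecting neighbourhoods accordingly. The genuinely delicate step, which I expect to be the main obstacle, is the last sub-case $m=3$, $o(s)\geq 6$, $s^3\in S$: here $\Gamma$ is a twisted analogue of $\Gamma'\square C_3$, with monodromy around the triangle equal to left translation by $s^3\neq 1$ on $H$, so it need not be a Cartesian product and \autoref{lem::CP} does not apply; one must instead determine the common neighbours of $1$ and $sh$ entirely by hand and verify that the external edges of $sh$ and the $\langle s\rangle$-translates contribute no additional ones.
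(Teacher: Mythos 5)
Your proof is correct and follows essentially the same strategy as the paper's: the coset structure from Lemma~\ref{lem::A}, a case analysis on $m=[G:H]$ driven by counting common neighbours of $1$ and $s^2$, and the Cartesian-product argument via Lemma~\ref{lem::CP} when $o(s)=3$. Two of your local choices differ from the paper's, and both check out. First, you note up front that $a=(tx)^2\in H$ for any $tx\in S\cap tA$, which rules out $o(s)=4$ whenever $m\geq 3$ and lets you treat $m=4$ and $m\geq 5$ uniformly; the paper handles $m=4$, $o(s)=4$ as a separate subcase using the same observation. Second, in the subcase $m=3$, $o(s)\geq 6$, $s^3\in S$, you finish by comparing $c_2(1,s^2)=3$ with $c_2(1,sh)=2$ for $h\in S\setminus\langle s\rangle$. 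The verification you flag as delicate does go through: by Lemma~\ref{lem::A}(i) the only candidates for a common neighbour of $1$ and $sh$ lying in $H$, $sH$, $s^2H$ are $h$, $s$, and an element of $\{s^{-1}\}\cap\{s^2h\}$ respectively, and $s^{-1}=s^2h$ would force $h=s^{-3}\in\langle s\rangle$, which is excluded; also $sh\notin S$ since $sh\notin H\cup\{s,s^{-1}\}$, so $\partial(1,sh)=2$ and the pair has exactly two common neighbours, contradicting distance-regularity. The paper instead continues with the pair $(1,s^4)$ to force $o(s)=6$ and then derives $tx\in\langle s\rangle$ from the pair $(1,stx)$; your route is slightly shorter.
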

\begin{proof}
First assume that $[G:H]\geq 5$. Then $o(s)\geq [G:H]\geq 5$. Consider the vertices $1\in H$ and $s^2\in s^2H\neq H$. By Lemma \ref{lem::A}, the two neighbors of $1$ outside $H$ are $s\in sH$ and $s^{-1}\in s^{-1}H$, and the two neighbors of $s^2$ outside $s^2H$ are $s\in sH$ and $s^3\in s^3H$. Since $s^{-1}\neq s^3$, we have $N(1)\cap N(s^2)=\{s\}$, which is impossible because $\partial(1,s^2)=2$ and $c_2\geq 2$ by Lemma \ref{lem::A}.

Next assume that $[G:H]=4$. We have $o(s)\geq 4$. If $o(s)\geq 5$, as above, we obtain a contradiction. Thus $o(s)=4$, and $s^2=\alpha$. Recall that $S\cap tA\neq\emptyset$. Take $tx\in S\cap tA$. Then $s^2=\alpha=t^2=txtx\in H$, which implies that $s^2H=H$. Therefore,  $[G:H]=2$, a contradiction.

Now assume that $[G:H]=3$. Recall that $o(s)$ is a multiple of $[G:H]$. We consider the following two cases.
{\flushleft \bf Case A.}  $o(s)\geq 6$.

As above, consider the vertices $1\in H$ and $s^2\in s^2H$. Note that $\partial(1,s^2)=2$ because $s\in N(1)\cap N(s^2)$ and $s^2\not\in  H$ due to $[G:H]=3$. Since $c_2\geq 2$, by Lemma \ref{lem::A}, $1$ and $s^2$ have at least one more common neighbor, which can only be $s^{-1}\in s^2H$ or $s^3\in H$. In both cases, we get $s^3\in S$, and hence $\{s^{-1},s^3\}\subseteq N(1)\cap N(s^2)$. This implies that $c_2=3$. Now consider the vertices $1\in H$ and $s^4\in s^4H=sH$. Clearly, $1$ and $s^4$ are not adjacent, and so $\partial(1,s^4)=2$ because $\{s,s^3\}\in N(1)\cap N(s^4)$. Again by Lemma \ref{lem::A}, the remaining common neighbor of $1$ and $s^4$ other than  $s,s^3$ can only be  $s^{-1}=s^5$. Hence, $o(s)=6$. Since $S\cap tA\neq \emptyset$, we can take  $x\in A$ such that $tx\in S$. Clearly, $1\in H$ and $stx\in sH$ are not adjacent because $S$ is minimal with respect to $s$. Observe that $stx=txs^{-1}$, we have $\{s,tx\}\subseteq N(1)\cap N(stx)$, and $\partial(1,stx)=2$. As $c_2=3$, the vertices $1$ and $stx$ have another common neighbor, which can only be $s^{-1}=stxs^{-1}$ by Lemma \ref{lem::A}. In this situation, we obtain $s=t^{-1}x$, which is impossible.

{\flushleft \bf Case B.}  $o(s)= 3$.

Recall that $G$ is the disjoint union of $H$, $sH$ and $s^2H$, and that the subgraph  $\Gamma[s^iH]$ of $\Gamma$ induced by $s^iH$ is isomorphic to $\Gamma'$ for $0\leq i\leq 2$. We claim that $G\cong K_3\square  \Gamma'$. In fact, for every pair of vertices $x,y\in H$, the vertices $s^ix,s^iy\in s^iH$ are adjacent in $\Gamma[s^iH]$ if and only if $x,y$ are adjacent in $\Gamma[H]=\Gamma'$. Moreover, by Lemma \ref{lem::A}, each vertex $x\in s^iH$ ($0\leq i\leq 2$) has exactly two neighbors outside $s^iH$, namely $xs=sx\in s^{i+1}H$ and $xs^{-1}=s^{-1}x\in s^{i-1}H$, or $xs=s^{-1}x\in s^{i-1}H$ and $xs^{-1}=sx\in s^{i+1}H$. Thus we conclude that $\Gamma\cong K_3\square  \Gamma'$. As a Cartesian product of graphs can be factorized uniquely as a product of prime factors, Lemma \ref{lem::CP} implies that $\Gamma$ is isomorphic to the Hamming graph $H(d,3)$ for some positive integer $d$. However, this is impossible because $\Gamma$ is of even  order $4n$.

Therefore, we  have $[G:H]=2$, and so $o(s)\geq 4$ because  $o(s)>2$ is a multiple of  $[G:H]$.
\end{proof}

According to Claim \ref{claim::1}, $G$ is the disjoint union of the left cosets $H$ and $sH$. Then we can obtain the following result.

\begin{claim}\label{claim::2}
We have
\begin{enumerate}[(i)]
\item $a_1\in\{0,2\}$, and $a_1=2$ if and only if $s^2\in S$;
\item $c_2\in \{2,4\}$, and if $o(s)\geq 6$ then $c_2= 4$.
\end{enumerate}
\end{claim}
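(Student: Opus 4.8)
The plan is to read each intersection number off the common neighbours of one carefully chosen pair of vertices, using the two‑layer decomposition $G=H\cup sH$ from Claim~\ref{claim::1} together with Lemma~\ref{lem::A}; throughout note that $s\notin H$ while $s^2\in H$ (as $[G:H]=2$), so $S\cap sH=\{s,s^{-1}\}$, $S\cap H=S\setminus\{s,s^{-1}\}$, and $H\trianglelefteq G$. For part (i), by vertex‑transitivity it suffices to count the common neighbours of the adjacent pair $1,s$, that is, $a_1=|S\cap sS|$. Writing a typical such element as $ss'$ with $s'\in S$ and asking when $ss'\in S$, one checks the two cases $s'\in S\cap H$ and $s'\in\{s,s^{-1}\}$: this forces $S\cap sS=\varnothing$ if $s^2\notin S$, and $S\cap sS=\{s^{-1},s^2\}$ if $s^2\in S$ (with $s^{-1}\neq s^2$ since $o(s)\geq4$). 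Hence $a_1\in\{0,2\}$, and $a_1=2\iff s^2\in S$.

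For the first half of (ii), choose $h\in S\setminus\langle s\rangle$ (possible since $\langle S\rangle=G\neq\langle s\rangle$), so that $h\in S\cap H$ and $sh\in sH$; exactly as in the proof of Lemma~\ref{lem::A}(ii) one gets $\partial(1,sh)=2$. I would compute $N(1)\cap N(sh)$ coset by coset: inside $H$ the neighbours of $1$ are $S\cap H$ while, by Lemma~\ref{lem::A}, the neighbours of $sh$ are its two ``cross'' neighbours $h$ and $s^2h$; inside $sH$ the neighbours of $1$ are $\{s,s^{-1}\}$ while those of $sh$ are $sh\,(S\cap H)$. Intersecting gives $N(1)\cap N(sh)=\{h,s\}$ when $s^2h\notin S$ and $N(1)\cap N(sh)=\{h,s^2h,s,s^{-1}\}$ (four distinct vertices) when $s^2h\in S$. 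Since $\partial(1,sh)=2$, this yields $c_2\in\{2,4\}$, with $c_2=4$ precisely when $s^2h\in S$.

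For the remaining assertion I would argue by contradiction: suppose $c_2=2$ and $o(s)\geq6$. First, $a=(tx)^2\in H$ for any $tx\in S\cap tA$ (such $tx$ exists because $\langle S\rangle=G\not\subseteq A$, and $tx\in S\setminus\{s,s^{-1}\}\subseteq H$); combined with $s^2\in H$, $s\notin H$ this forces $o(s)$ to be even (otherwise $\langle s\rangle=\langle s^2\rangle\leq H$), hence $a=s^{o(s)/2}$, and then $4\mid o(s)$ (otherwise $\langle a,s^2\rangle=\langle s\rangle\leq H$); so $o(s)\geq6$ actually means $o(s)\geq8$. Next, the computation of the previous paragraph together with $c_2=2$ gives $s^{2}h\notin S$ for every $h\in S\setminus\langle s\rangle$, and the analogous computation for the pair $1,s^{-1}h$ gives $s^{-2}h\notin S$ as well. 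Now I would count $N(s)\cap N(s^{-1})$: inside $H$ it reduces to $\{1\}$ (using $o(s)\neq4$); inside $sH$ the ``$h\in S\setminus\langle s\rangle$'' candidates are killed by $s^{\pm2}h\notin S$, so the common neighbours there are exactly the vertices $s^{1-2i}$ with $s^{2i}\in S$ and $s^{2i-2}\in S$. Therefore $|N(s)\cap N(s^{-1})|=1+|P|$, where $P=\{\,i\in\mathbb{Z}_{o(s)/2}: s^{2i}\in S\text{ and }s^{2i+2}\in S\,\}$. But inverse‑closedness of $S$ makes $P$ invariant under $i\mapsto-i-1$, an involution of $\mathbb{Z}_{o(s)/2}$ that is fixed‑point‑free because $o(s)/2$ is even; hence $|P|$ is even and $|N(s)\cap N(s^{-1})|$ is odd. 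Finally, if $s^2\in S$ then $s\sim s^{-1}$ and this odd number equals $a_1=2$ by (i); if $s^2\notin S$ then $\partial(s,s^{-1})=2$ and it equals $c_2=2$. Either way we get a contradiction (in fact this shows $o(s)\geq6$ cannot occur at all), which in particular gives $c_2=4$ whenever $o(s)\geq6$.

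The coset‑by‑coset bookkeeping in the first three paragraphs is routine. The step I expect to be the real obstacle is the last one: first, spotting that $a=s^{o(s)/2}\in H$ forces $4\mid o(s)$ — this is what upgrades ``$o(s)\geq6$'' to ``$o(s)\geq8$'' and makes the parity of $|P|$ decisive — and second, verifying cleanly that once $c_2=2$ gives $s^{\pm2}h\notin S$ for $h\notin\langle s\rangle$, the common neighbours of $s$ and $s^{-1}$ outside $H$ are precisely the stated powers of $s$, with no contribution from $S\setminus\langle s\rangle$.
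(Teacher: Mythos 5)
Your proof is correct. Parts (i) and the first half of (ii) follow essentially the paper's route (the paper uses a specific $tx\in S\cap tA$ where you use a general $h\in S\setminus\langle s\rangle$; both work, since in either case the four candidate common neighbours of $1$ and $sh$ are $h,s^2h\in H$ and $s,s^{-1}\in sH$). Where you genuinely diverge is the last assertion, $o(s)\geq 6\Rightarrow c_2=4$. The paper examines the pair $1,s^2$: since $o(s)\geq 6$ forces $s$ to be their only common neighbour in $sH$, the second common neighbour $y$ lies in $H$, and then $1$, $s^2$ and $sy$ are three common neighbours of $s$ and $y$, forcing $sy=1$ and hence $y=s^{-1}\notin H$ --- a short, purely local contradiction. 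You instead run a parity argument on $|N(s)\cap N(s^{-1})|$: after observing that $a=(tx)^2\in H$ together with $s^2\in H$, $s\notin H$ forces $4\mid o(s)$ (so $o(s)\geq 8$), the hypothesis $c_2=2$ kills all contributions from $S\setminus\langle s\rangle$, leaving $|N(s)\cap N(s^{-1})|=1+|P|$ with $|P|$ even by the fixed-point-free involution $i\mapsto -i-1$ of $\mathbb{Z}_{o(s)/2}$; this odd count contradicts $a_1=2$ or $c_2=2$. I checked the bookkeeping (in particular that $\langle s\rangle\cap H=\langle s^2\rangle$, that the common neighbours of $s,s^{-1}$ in $H$ reduce to $\{1\}$ when $o(s)\neq 4$, and that the involution is fixed-point-free exactly because $o(s)/2$ is even) and it holds up. Your approach is longer but extracts the structural fact $4\mid o(s)$, which the paper only obtains later; the paper's argument is shorter and does not need divisibility of $o(s)$. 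One small caveat: your parenthetical remark that the argument ``shows $o(s)\geq 6$ cannot occur at all'' overreaches --- your contradiction is derived under the standing assumption $c_2=2$, so it only rules out the combination $c_2=2$ and $o(s)\geq 6$; excluding $o(s)\geq 6$ outright is the content of the paper's Claim 3 and needs the $c_2=4$ analysis. This does not affect the validity of the claim you were asked to prove.
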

\begin{proof} (i) We consider the vertices $1\in H$ and $s\in sH$. By Lemma \ref{lem::A}, the common neighbors of $1$ and $s$ can only be $s^{-1}\in s^{-1}H=sH$ and $s^2\in s^2H=H$. Note that $s^{-1}\in N(1)\cap N(s)$ if and only if $s^2\in N(1)\cap N(s)$. Since $s^2\neq s^{-1}$ due to $o(s)\geq 4$, we have  $a_1=0$ or $2$, and $a_1=2$ if and only if $s^2\in S$.

\vspace*{2mm}

\noindent
(ii) Pick $tx\in S\cap tA$. Consider the vertices $1\in H$ and $stx\in sH$. Clearly, they are not adjacent because $S$ is minimal with respect to $s$.  As  $\{s,tx\}\in N(1)\cap N(stx)$, we have $\partial(1,stx)=2$. By Lemma \ref{lem::A}, the only other possible common neighbors of $1$ and $stx$ are $s^{-1}\in s^{-1}H=sH$ and $stxs^{-1}=s^2tx\in s^2H=H$. Furthermore, we see that $s^{-1}\in N(1)\cap N(stx)$ if and only if $s^2tx\in S$, which is the case if and only if $s^2tx\in N(1)\cap N(stx)$. Clearly, $s$, $s^{-1}$, $tx$ and $s^2tx$ are pairwise distinct. Therefore, we have $N(1)\cap N(stx)=\{s,tx\}$ or $\{s,s^{-1},tx,s^2tx\}$, and so $c_2=2$ or $4$.  Suppose that $o(s)\geq 6$ and  $c_2=2$. Consider the vertices $1\in H$ and $s^2\in s^2H=H$. Clearly, $\partial(1,s^2)\leq 2$ because $s\in N(1)\cap N(s^2)$. Since  $a_1=2$ if and only if $s^2\in S$, we assert that $1$ and $s^2$ has exactly two common neighbors. As $o(s)\geq 6$, by Lemma \ref{lem::A}, we see that $s$ is the unique common neighbor of $1$ and $s^2$ in $sH$. Thus the remaining common neighbor of $1$ and $s^2$ other than $s$, say $y$, must be contained in $H$.  Then $\partial(s,y)\leq 2$, and  $1$ and $s^2$ are the only common neighbors of $s$ and $y$. Furthermore, we see that $sy=ys$ or $sy=ys^{-1}$ is also a common neighbor of $s$ and $y$. Thus we  have $sy\in\{1,s^2\}$, and hence $sy=1$ because $s\neq y$. However, this implies that $y=s^{-1}\in s^{-1}H=sH$, a contradiction. Therefore, if $o(s)\geq 6$ then $c_2=4$.
\end{proof}

\begin{claim}\label{claim::3}
 $o(s)=4$.
\end{claim}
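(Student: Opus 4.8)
The plan is to rule out $o(s)\geq 6$ by exploiting the intersection numbers already pinned down in Claim \ref{claim::2}, together with the coset structure from Claim \ref{claim::1}. Since $[G:H]=2$ and $o(s)\geq 4$, we know $s^2\in A\cap H$ and $G=H\cup sH$, and by Claim \ref{claim::2} we have $a_1\in\{0,2\}$ with $a_1=2\iff s^2\in S$, while $c_2\in\{2,4\}$ with $c_2=4$ whenever $o(s)\geq 6$. So I would assume for contradiction that $o(s)\geq 6$, which forces $c_2=4$.

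First I would extract structural consequences of $c_2=4$ by revisiting the pair $1\in H$ and $stx\in sH$ (for a fixed $tx\in S\cap tA$): from the analysis in the proof of Claim \ref{claim::2}(ii), $c_2=4$ forces $N(1)\cap N(stx)=\{s,s^{-1},tx,s^2tx\}$, so in particular $s^2tx\in S$ and $s^{-1}\in S$, i.e.\ $s\in S$ would already be known and now also $s^2 tx\in S$. Then I would turn to the pair $1\in H$ and $s^2\in H$: since $s\in N(1)\cap N(s^2)$ we have $\partial(1,s^2)\leq 2$, and because $o(s)\geq 6$ the vertex $s^{-1}=s^{o(s)-1}$ is not adjacent to $1$, so $s$ is the unique common neighbor of $1$ and $s^2$ lying in $sH$ (using Lemma \ref{lem::A}(i), which says the only neighbors of $1$ outside $H$ are $s^{\pm1}$, and the only neighbors of $s^2$ outside $H$ are $s$ and $s^3$, and $s^3\neq s^{-1}$). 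So of the four common neighbors demanded by $c_2=4$, three must lie in $H$; these three together with $s$ would have to be common neighbors of $1$ and $s^2$, giving elements $y\in S\cap H$ with $y\in S$, $s^2y^{-1}\in S$ — and I would count these carefully, playing the element $s^2$ against $1$ inside the copy of $\Gamma'$ and against the two vertices $s$ and $s^3$ of the neighboring cosets, to force $o(s)$ down. The cleanest contradiction is likely the one sketched at the end of Claim \ref{claim::2}(ii): picking such a $y\in H$, the pair $s$ and $y$ has $1,s^2$ among its common neighbors, and $sy$ (equal to $ys$ or $ys^{-1}$, since $y\in A$) is another, so $c_2\geq 3$ forces $sy\in\{1,s^2\}$; $sy=1$ gives $y=s^{-1}\notin H$, a contradiction, while $sy=s^2$ gives $y=s\notin H$, again a contradiction. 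To make this work I need the third common neighbor of $1$ and $s^2$ (beyond $s$ and one more) to genuinely exist, which is exactly where $c_2=4$ is used: $1$ and $s^2$ have four common neighbors, at most one of which ($s$) is outside $H$, so at least three lie in $H\setminus\{s\}$, and any such $y$ triggers the contradiction above.

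I expect the main obstacle to be bookkeeping the case $s^3\in S$ versus $s^3\notin S$ and making sure the "extra" common neighbors of $1$ and $s^2$ inside $H$ are not accidentally equal to $s^{-1}$, $s^2$, or each other in a way that weakens the count; in particular one must keep track of whether the neighbor $s^3$ of $s^2$ in $sH$ could coincide with a neighbor of $1$, which it cannot since $o(s)\geq 6$ makes $s^3\neq s^{-1}$. Once $o(s)\geq 6$ is eliminated, combined with $o(s)\geq 4$ from Claim \ref{claim::1} and the fact that $o(s)$ is a multiple of $[G:H]=2$, the only remaining possibility is $o(s)\in\{4,5\}$; but $o(s)$ must be even as a multiple of $2$, ruling out $5$, so $o(s)=4$, which is the claim. (If the parity argument via $[G:H]\mid o(s)$ is not wanted, one can instead kill $o(s)=5$ directly: then $s^2H=H$ forces $5=o(s)$ even, or one runs the same "extra common neighbor of $1$ and $s^2$" argument, since $o(s)=5\geq 4$ still gives $s^3\neq s^{-1}$ and the pair $1,s^2$ argument with $c_2\in\{2,4\}$ applies verbatim.)
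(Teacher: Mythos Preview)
Your argument has a genuine gap at the key step. You propose to transplant the end of the proof of Claim~\ref{claim::2}(ii) to the present setting, but that argument hinges on $c_2=2$, while here you are working under $c_2=4$. Concretely: with $y\in H\cap N(1)\cap N(s^2)$, the only possible common neighbors of $s\in sH$ and $y\in H$ are (by Lemma~\ref{lem::A}(i)) the two $H$-neighbors of $s$, namely $1,s^2$, and the two $sH$-neighbors of $y$, namely $sy,s^{-1}y$. All four are indeed common neighbors (for $s^{-1}y$: it is adjacent to $s$ because $y^{-1}s^2\in S$, which is exactly the hypothesis $y\in N(s^2)$), and all four are distinct since $o(s)\geq 6$ and $y\in H$. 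So $|N(s)\cap N(y)|=4$, which is perfectly consistent with $c_2=4$; your line ``$c_2\geq 3$ forces $sy\in\{1,s^2\}$'' does not follow. You can only conclude $\partial(s,y)=2$ (since $a_1\leq 2$), which is not a contradiction.

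There is a second gap: you implicitly assume $\partial(1,s^2)=2$ when you invoke ``four common neighbors demanded by $c_2=4$'', but if $a_1=2$ then $s^2\in S$ by Claim~\ref{claim::2}(i), so $\partial(1,s^2)=1$ and $|N(1)\cap N(s^2)|=2$, not $4$; your count of ``three common neighbors in $H$'' collapses. The paper handles the two cases $a_1=2$ and $a_1=0$ separately. In each case it does \emph{not} look at $(1,s^2)$ first; instead it uses $c_2=4$ on the pairs $(1,s^{2i+1})$ (when $a_1=2$) or $(1,s^{2i+1}y)$ with $y\in S\setminus\langle s\rangle$ (when $a_1=0$) to inductively force $s^{2i}\in S$, respectively $s^{2i}y\in S$, for all $i$. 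This chain then feeds back into $N(1)\cap N(s^2)$ to pin down $o(s)=6$, after which a final contradiction is extracted using the element $tx\in S\cap tA$ (and, in the $a_1=0$ case, the relation $t^2=a$). Your sketch does not reach any of these steps.

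Your closing remarks about ruling out $o(s)=5$ are fine but unnecessary: as you note, $[G:H]=2$ divides $o(s)$, so $o(s)$ is even and only $o(s)\geq 6$ needs to be excluded.
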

\begin{proof}
By the way of contradiction, we assume that $o(s)\geq 6$. By Claim \ref{claim::2}, we have $a_1\in\{0,2\}$ and $c_2=4$.

First suppose that $a_1=2$. Then $s^2\in S$ by Claim \ref{claim::2}. We assert that $\{s^2,s^4,\ldots,s^{o(s)-2}\}$ $\subseteq S$. In fact, assume that $\{s^2,s^4,\ldots,s^{2i}\}\subseteq S$ for some $1\leq i<\frac{o(s)-2}{2}$. Consider the vertices $1\in H$ and $s^{2i+1}\in s^{2i+1}H=sH$. Clearly, $\partial(1,s^{2i+1})=2$ because $s^{2i+1}\neq s^{-1}$ and $S$ is minimal with respect to $s$. Then $1$ and $s^{2i+1}$ have exactly four common neighbors, which can only be $s$, $s^{-1}$, $s^{2i}$ and $s^{2i+2}$ by Lemma \ref{lem::A}. Thus $s^{2i+2}\in S$, and our assertion follows. Then we see that  $\{s,s^4,\ldots,s^{o(s)-2}\}\subseteq N(1)\cap N(s^2)$, which leads to $o(s)=6$ because $a_1=2$.
Pick $tx\in S\cap tA$. Note that $\partial(1,stx)=2$. Then $1\in H$ and $stx\in sH$ have exactly four common neighbors, which can only be $s$, $s^{-1}$, $stxs=tx$ and $stxs^{-1}=s^2tx$ by Lemma \ref{lem::A}. It follows that $s^2tx\in S$, and  so $s^2tx\in N(1)\cap N(s^2)$. Thus $1$ and $s^2$ have at least three common neighbors, which is impossible due to $a_1=2$.

Now suppose that $a_1=0$.  Then $s^2\not\in S$ by Claim \ref{claim::2}. Pick $y\in S\setminus\langle s \rangle$. Note that $\partial(1,sy)=2$. We assert that $\{y,s^2y,s^4y,\ldots,s^{o(s)-2}y\}\subseteq S$. Indeed, assume that $\{y,s^2y,s^4y,\ldots,s^{2i}y\}\subseteq S$ for some $0\leq i<\frac{o(s)-2}{2}$. Consider the vertices $1\in H$ and $s^{2i+1}y\in s^{2i+1}H=sH$. Clearly, $\partial(1,s^{2i+1}y)=2$ because $s^{2i+1}y\neq s^{-1}$ due to $y\not\in \langle s \rangle$ and $S$ is minimal with respect to $s$. By Lemma \ref{lem::A}, we see that $1$ and $s^{2i+1}y$ have exactly four common neighbors, namely $s$, $s^{-1}$, $s^{2i}y$ and $s^{2i+2}y$. Thus $s^{2i+2}y\in S$ and the assertion follows. Then  $\{s,y,s^2y,\ldots,s^{o(s)-2}y\}\subseteq N(1)\cap N(s^2)$, which implies that $o(s)=6$ because $\partial(1,s^2)=2$ and $c_2=4$.  Since $s$, $y$, $s^2y$ and $s^4y$ are the four distinct common neighbors of $1$ and $s^2$, we conclude that $S\setminus \langle s\rangle=\{y,s^2y,s^4y\}$ by the above arguments and the arbitrariness of $y$. Note that $s^2,s^4=(s^2)^{-1}\not\in S$ because $a_1=0$. Moreover, we assert that $s^3\not\in S$, since otherwise $S$ cannot  be minimal with respect to $s$ because   $s=s^{3} \cdot y \cdot (s^2y)^{-1}$ and  $\{s^3,y,s^2y\}\subseteq S$, contrary to our assumption. Therefore,  $S=\{s,s^{-1}=s^5,y,s^2y,s^4y\}$.  As $\langle S\rangle=G$, we have $y\not\in A$, and so $y=tx$ for some $x\in A$. Then it follows from $y^{-1}=t^{-1}x\in S$ that $t^{-1}x=s^2y=s^2tx=ts^4x$ or $t^{-1}x=s^4y=s^4tx=ts^2x$, i.e., $t^2=s^4$ or $t^2=s^2$. Therefore, as $t^4=1$, we have $s^8=1$ or $s^4=1$, which is impossible because   $o(s)=6$.
\end{proof}

\begin{claim}\label{claim::4}
$\Gamma$ is isomorphic to $K_{4,4}$.
\end{claim}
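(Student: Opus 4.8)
The plan is to exploit what Claims~\ref{claim::1}--\ref{claim::3} already give us — $[G:H]=2$ and $o(s)=4$, so that $s^{2}=a$ is the unique (central) involution of $G$ — and to pin down $S$ exactly. Fix $tx\in S\cap tA$; then $(tx)^{2}=t^{2}=a$, so $o(tx)=4$ and $(tx)^{-1}=a(tx)$, and likewise every element of $S\cap tA$ has order $4$ and square $a$.

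First I would sharpen Claim~\ref{claim::2}. Since $s^{2}=a$, the element $s^{2}tx=a(tx)=(tx)^{-1}$ lies in $S$, so in the proof of Claim~\ref{claim::2}(ii) the first alternative is impossible and $c_{2}=4$. Next I would rule out $a_{1}=2$: if $a_{1}=2$ then $a=s^{2}\in S$ by Claim~\ref{claim::2}(i), and then the adjacent vertices $1$ and $a$ have three distinct common neighbours $s,s^{-1},tx$ (using $as=s^{-1}$, $as^{-1}=s$ and $a(tx)=(tx)^{-1}$, all in $S$), contradicting $a_{1}=2$. Hence $a_{1}=0$, $a\notin S$, $\partial(1,a)=2$, and therefore $N(1)\cap N(a)=\{s,s^{-1},tx,(tx)^{-1}\}$ (these four are common neighbours, and there are exactly $c_{2}=4$ of them). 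The consequence I would record is: if $z\in S$ and $z\notin\{s,s^{-1},tx,(tx)^{-1}\}$, then $az\notin S$.

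The heart of the proof is then to show $S=\{s,s^{-1},tx,(tx)^{-1}\}$. For $S\cap tA$ this is immediate: any $ty\in(S\cap tA)\setminus\{tx,(tx)^{-1}\}$ would give $(ty)^{-1}=a(ty)\in S$, contradicting the recorded consequence; so $S\cap tA=\{tx,(tx)^{-1}\}$. For $S\cap A$ I would argue by contradiction: suppose there is $z\in S\cap A$ with $z\notin\langle s\rangle$. Then $z^{2}\neq a$ (else $z\in N(1)\cap N(a)\cap A=\{s,s^{-1}\}$), so $z^{2}\notin\{1,a\}$, and $z^{2}\notin S$ (else $1,z,z^{2}$ is a triangle, contradicting $a_{1}=0$). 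I would then look at the vertex $z\cdot tx\in tA$: it is not in $S$ (as $S\cap tA=\{tx,(tx)^{-1}\}$), while $z$ and $tx$ are common neighbours of $1$ and $z\cdot tx$ since $z^{-1}(z\cdot tx)=tx\in S$ and $(tx)^{-1}\cdot z\cdot(tx)=z^{-1}\in S$; hence $\partial(1,z\cdot tx)=2$ and there are $c_{2}=4$ common neighbours. A short computation shows that none of $z^{-1},(tx)^{-1},s,s^{-1}$ is a common neighbour of $1$ and $z\cdot tx$, so there must be a common neighbour $w\in S\setminus\{s,s^{-1},tx,(tx)^{-1},z,z^{-1}\}$; but $w^{-1}(z\cdot tx)\in S$ together with $S\cap A=\{s,s^{-1}\}$, $S\cap tA=\{tx,(tx)^{-1}\}$ and the recorded consequence forces $w\in\{z,az\}$ when $w\in A$, and a direct contradiction when $w\in tA$ — in either case impossible. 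Thus $S\cap A=\{s,s^{-1}\}$ and $S=\{s,s^{-1},tx,(tx)^{-1}\}$.

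Finally, $G=\langle S\rangle=\langle s,tx\rangle$ has order $8$: $\langle s\rangle$ has order $4$, $tx\notin\langle s\rangle$, and $(tx)^{2}=s^{2}\in\langle s\rangle$, so $[\langle s,tx\rangle:\langle s\rangle]=2$; since $(tx)s(tx)^{-1}=s^{-1}$ and $a=s^{2}=(tx)^{2}$ is the only involution (and is central), $G$ is the quaternion group $Q_{8}$. A direct check gives $\{s,s^{-1},tx,(tx)^{-1}\}=Q_{8}\setminus\langle s\cdot tx\rangle$, the nontrivial coset of the order-$4$ subgroup $\langle s\cdot tx\rangle$; hence the complement of $\Gamma$ is $\mathrm{Cay}(Q_{8},\langle s\cdot tx\rangle\setminus\{1\})$, a disjoint union of two copies of $K_{4}$, so $\Gamma$ is the complement of $2K_{4}$, i.e.\ $\Gamma\cong K_{4,4}$. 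The step I expect to be the main obstacle is excluding the hypothetical extra element $z\in S\cap A$: the bookkeeping of which products land inside $S$ must be done carefully, and the correct auxiliary vertex is $z\cdot tx$ rather than $z^{2}$, precisely so that the smallness of $S\cap tA$ can be exploited.
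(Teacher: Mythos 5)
Your proof is correct, and the first half of it (using $s^2=a$ central and $(tx)^{-1}=a\cdot tx\in S$ to force $c_2=4$, rule out $a_1=2$, pin down $N(1)\cap N(a)=\{s,s^{-1},tx,(tx)^{-1}\}$, and conclude $S\cap tA=\{tx,(tx)^{-1}\}$) is essentially the argument the paper gives. Where you diverge is in how you finish: the paper stops at $S\cap tA=\{tx,(tx)^{-1}\}$, observes that $S$ is therefore also minimal with respect to the element $tx\in tA$, and simply invokes the already-completed analysis of the case $s\in tA$ from Subsection 2.1 to conclude $\Gamma\cong K_{4,4}$. You instead stay self-contained: you determine $S\cap A=\{s,s^{-1}\}$ directly by introducing the auxiliary vertex $z\cdot tx$ for a hypothetical $z\in (S\cap A)\setminus\langle s\rangle$ and showing its four required common neighbours with $1$ cannot exist (your bookkeeping here checks out: $z^2\ne a$, $az^{\pm1}\notin S$, and any fourth common neighbour in $A$ is forced to be $z$ or $az$), and you then identify $G\cong Q_8$ and $\Gamma$ as the complement of $2K_4$ by hand. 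The paper's route is shorter but leans on the earlier case analysis (and on the observation that the roles of $s$ and $tx$ are interchangeable once $|S\cap tA|=2$); yours costs an extra page of computation but gives a complete, independent determination of $S$ and of the group, which makes the claim verifiable without re-reading Subsection 2.1. Both are valid proofs.
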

\begin{proof}
By Claim \ref{claim::3}, we have $o(s)=4$, and so  $s^2=\alpha=t^2$. Pick $tx\in S\cap tA$. Then $t^{-1}x=(tx)^{-1}\in S$, and we see that $s^2=t^2=txtx=t^{-1}xt^{-1}x$. This implies that $\{s,s^{-1},tx,t^{-1}x\}\subseteq N(1)\cap N(s^2)$. Thus $s^2\not \in S$ (i.e., $a_1=0$) and $c_2=4$ by Claim \ref{claim::2}. By the arbitrariness of $tx\in S\cap tA$, we conclude that $S\cap tA=\{tx,t^{-1}x\}$, and so $S$ is also minimal with respect to $tx$.  According to what we have proved in the previous subsection, we assert that $\Gamma$ can only be isomorphic to $K_{4,4}$.  Note that $K_{4,4}\cong \mathrm{Cay}(G,\{s,s^{-1}=s^3,t,t^{-1}\})$. The result follows.
\end{proof}

Concluding the above results, we finish the proof of Theorem \ref{thm::main}.

\section{Concluding remarks}

In this paper, we prove that $K_{4,4}$ is the unique distance-regular Cayley graph of generalized dicyclic groups under the condition that the corresponding connection set is minimal. 

For a distance-regular graph  $\Gamma$ with diameter $d$, the \textit{$i$-th distance graph} $\Gamma_i$ is defined as the graph with vertex set  $V(\Gamma)$ in which two vertices are adjacent if and only if they are at distance $i$ in $\Gamma$. We say that $\Gamma$ is \textit{primitive} if $\Gamma_i$ is connected for all $1\leq i\leq d$, and \textit{imprimitive} otherwise.  

Let $G$ be a generalized dicyclic group of order $4n$. Assume that $\Gamma=\mathrm{Cay}(G,S)$ is a primitive distance-regular Cayley graph of $G$, and that $\mathcal{D}=\mathcal{D}_\mathbb{Z}(G,S)$ is the distance module of $\Gamma$ (see \cite{MP03} for the definition). According to \cite[Proposition 3.6(i)]{MP03}, $\mathcal{D}$ is a primitive Schur ring over $G$, and so must be a trivial Schur ring by \cite[Theorem 4]{S57}. Then it follows from \cite[Proposition 3.6(ii)]{MP03} that $\Gamma$ must be isomorphic to the complete graph $K_{4n}$. Therefore, in order to characterize distance-regular Cayley graphs of generalized dicyclic groups, it suffices to consider those that are imprimitive. Also note that an imprimitive distance-regular graph of valency at least $3$ is either bipartite, antipodal, or both \cite[Theorem 4.2.1]{BCN89}. In future, we will consider to  classify the distance-regular Cayley graphs of generalized dicyclic groups that are  bipartite or antipodal.

\section*{Declaration of competing interest}

The authors declare that they have no known competing financial interests or personal relationships that could have
appeared to influence the work reported in this paper.

\section*{Acknowledgements}

The authors are grateful to the anonymous referees for their useful and constructive comments, which have considerably improved the presentation of this paper. X. Huang is supported by National Natural Science Foundation of China (Grant No. 11901540). K. C. Das is supported by National Research Foundation funded by the Korean government (Grant No. 2021R1F1A1050646).

\end{document}